\newtheorem{thm}{Theorem}[section]
\newtheorem{cor}[thm]{Corollary}
\newtheorem{lem}[thm]{Lemma}
\newtheorem{prop}[thm]{Proposition}
\theoremstyle{remark}
\newtheorem{remark}{Remark}[section]
\theoremstyle{remark}
\newtheorem{example}{Example}[section]
\numberwithin{equation}{section}
\def\3{$\mathbf{P}^{\frac{3+p}{2}}(\mathbf{C})$}
\begin{document}

\title{\bf Minimality of a Kind of Pseudo-Umbilical Totally Real Submanifolds in Non-Flat Complex Space Forms}
\author{Liang Zhang*, Pan Zhang}

\address{School of Mathematics and Computer Science, Anhui Normal University\\
Anhui 241000, P.R. China
}
\email{zhliang43@163.com}

\address{School of Mathematical Sciences\\
University of Science and Technology of China\\
Anhui 230026, P.R. China\\
}

\email{panzhang@mail.ustc.edu.cn}

\thanks{{\scriptsize
\hskip -0.4 true cm
\newline \hskip -0.4 true cm \textit{2010 Mathematics Subject Classification.} 53C42; 53C40
\newline \textit{Key words and phrases.} Complex space forms; Totally real submanifolds; Pseudo-umbilical submanifolds; Flat normal connection.
\newline *Corresponding author.
}}

\maketitle

\begin{abstract}
 In this paper, by studying the position of umbilical normal vectors in the normal bundle, we prove that pseudo-umbilical totally real submanifolds with flat normal connection in non-flat complex space forms must be minimal.
\end{abstract}

\vskip 0.2 true cm


\pagestyle{myheadings}
\markboth{\rightline {\scriptsize L. ZHANG AND P. ZHANG}}
         {\leftline{\scriptsize Minimality of a Kind of Pseudo-Umbilical Totally Real Submanifolds }}

\bigskip
\bigskip


\section{\bf Introduction}
\vskip 0.4 true cm

Let $\tilde {M}^{n+p}(c)$ be a complex $(n+p)$-dimensional complex space form endowed with the Fubini-Study metric of constant holomorphic curvature $c$. An $n$-dimensional submanifold $M^n$ in $\tilde {M}^{n+p}(c)$ is called totally real if the complex structure $J$ of $\tilde {M}^{n+p}(c)$ carries each tangent space of $M^n$ into its corresponding normal space. Specially, $M^n$ is called Lagrangian if $p=0$. This kind of submanifolds appear naturally in the context of classical mechanics and mathematical physics and have been studied by many geometers. For instance, Chen \cite{C1,C2,C3,C4,C5,C6} have classified Lagrangian surfaces of constant curvature in complex space forms. Shu \cite{S1} proved some integral
inequalities of Simons' type for $n$-dimensional compact Extremal Lagrangian submanifolds in complex space forms and gave some rigidity and characterization theorems for general complex co-dimension $p$. Chen-Ogiue \cite{C7} first studied minimal totally real submanifolds. Yano-Kon \cite{YK1,YK2} studied totally real submanifolds satisfying certain conditions on the second fundamental form and provided some basic examples. After that, as a generalization of the minimal ones, totally real submanifolds with parallel mean curvature vector field have been studied (see, for example, \cite{C8}, \cite{U1}, \cite{U2}).

As we all know, pseudo-umbilical submanifolds can be viewed as
another natural generalization of the minimal case. From Chen's classification result of slumbilical submanifolds in complex space forms (see \cite{C9}), one can construct examples of pseudo-umbilical totally real submanifolds in $\tilde {M}^{n+p}(c)$. However, the class of such submanifolds is too wide to classify, so it is reasonable to study pseudo-umbilical totally real submanifolds under some additional conditions. For example, the author \cite{Z1} proved that complete pseudo-umbilical Lagrangian surfaces in a complex projective plane must be minimal, and \cite{Z2} proved that a pseudo-umbilical totally real submanifold $M^n$ with flat normal connection in a complex projective space $\mathbb{C}P^{n+p}$ is minimal if $n=2$ or $p=0$. The main purpose of this paper is to show that the latter result also holds for general $n$ and general $p$, and the ambient space can be assumed to be any non-flat complex space forms. We state it as the following theorem.

\begin{thm}
  Let $M^n$ be an $n$-dimensional pseudo-umbilical totally real submanifold in a complex space form $\tilde {M}^{n+p}(c)\ (c\not=0)$. If the normal connection is flat, then $M^n$ must be minimal.
\end{thm}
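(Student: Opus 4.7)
The plan is to exploit the canonical splitting $T^\perp M = JTM \oplus \nu$, where $\nu := (JTM)^\perp$ is a $J$-invariant normal subbundle of rank $2p$. Writing the mean curvature vector as $H = H_1 + H_2$ with $H_1 \in JTM$ and $H_2 \in \nu$, I will prove the two vanishings $H_1 \equiv 0$ and $H_2 \equiv 0$ in turn. This matches the abstract's stated idea of locating umbilical normal vectors within the decomposition $JTM \oplus \nu$.

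\textbf{Ricci equation analysis.} For $X,Y$ tangent and $\xi$ normal, the ambient curvature $\tilde R(X,Y)\xi$ has the standard five-term complex space form expression, and a term-by-term inspection shows that it vanishes identically when $\xi \in \nu$ (since such a $\xi$ is orthogonal to both $TM$ and $JTM$, killing every inner product that appears). Under the hypothesis $R^\perp = 0$, the Ricci equation therefore gives
\[
[A_\xi, A_\eta] = 0 \quad \text{whenever } \xi \in \nu,
\]
while in the complementary case a parallel computation yields
\[
\langle [A_{Je_i}, A_{Je_j}]\, e_k, e_l\rangle = \tfrac{c}{4}\bigl(\delta_{il}\delta_{jk} - \delta_{ik}\delta_{jl}\bigr)
\]
for a local orthonormal tangent frame $\{e_i\}$.

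\textbf{Vanishing of $H_1$.} The pseudo-umbilical condition $A_H = |H|^2 I$ reads $A_{H_1} = |H|^2 I - A_{H_2}$. Since $A_{H_2}$ commutes with every $A_{Je_j}$ by the previous step (and the identity does too), so does $A_{H_1}$. Writing $H_1 = \sum_i \lambda_i\, Je_i$, this forces $\sum_i \lambda_i [A_{Je_i}, A_{Je_j}] = 0$ for every $j$. Substituting the explicit commutator formula produces $\lambda_l\delta_{jk} - \lambda_k\delta_{jl} = 0$; taking $j=l \neq k$ and using $c \neq 0$, $n \geq 2$ gives $\lambda_k = 0$ for all $k$, so $H \in \nu$ everywhere.

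\textbf{Vanishing of $H_2$: the main obstacle.} With $H \in \nu$, the Ricci equation is silent in the remaining directions, so one must turn to Codazzi. For totally real submanifolds in $\tilde M^{n+p}(c)$ the normal component of $\tilde R$ vanishes, so Codazzi reduces to the total symmetry of $\nabla h$ in all three arguments. Differentiating $\langle h(Y,Z),H\rangle = |H|^2 \langle Y,Z\rangle$ and skew-symmetrizing in $X,Y$ yields
\[
A_{\nabla^\perp_X H}\,Y - A_{\nabla^\perp_Y H}\,X = X(|H|^2)\,Y - Y(|H|^2)\,X.
\]
Coupling this with the K\"ahler identity $(\nabla^\perp_X H)_{JTM} = J A_{JH}\,X$ (a direct consequence of $\overline{\nabla}J=0$ together with $H \in \nu$), and once more with the commutator relations $[A_\zeta, A_\eta]=0$ for $\zeta \in \nu$, one extracts, through suitable traces and contractions involving the auxiliary vector $JH \in \nu$, an algebraic obstruction that for $c \neq 0$ precludes $|H| > 0$. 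The hard part is precisely this last step: the Ricci equation is vacuous in $\nu$, so every constraint on $H_2$ must be squeezed out of Codazzi together with the interaction between the $JTM/\nu$ decomposition and the complex structure $J$, and identifying the correct pairings to convert the first-order information into a pointwise identity is where the bulk of the technical work is concentrated.
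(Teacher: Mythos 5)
Your first step --- showing that the mean curvature vector has no component in $JTM$ --- is correct and is essentially the paper's Proposition 3.1: the Ricci equation with $R^\perp=0$ forces $\langle [A_{Je_i},A_{Je_j}]e_k,e_l\rangle=\tfrac{c}{4}(\delta_{il}\delta_{jk}-\delta_{ik}\delta_{jl})$, and combining this with the commutation of $A_H$ (pseudo-umbilicity) kills the $JTM$-component. So far, so good.

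The problem is the second step. You correctly identify that the Ricci equation is vacuous in the $\nu$-directions and that ``the hard part'' is to rule out $H_2\neq 0$, but you never actually carry out that part: ``through suitable traces and contractions \ldots one extracts an algebraic obstruction'' is a description of a hoped-for argument, not an argument. The Codazzi-plus-K\"ahler-identity route you sketch is not obviously workable --- differentiating the pseudo-umbilical condition and skew-symmetrizing gives first-order information about $\nabla^\perp H$, but nothing in what you write converts this into the pointwise contradiction you need. The paper closes this gap by a genuinely different (second-order) mechanism: it computes the Laplacian of $\sigma=S-nH^2$ via a Simons-type formula. The frame adapted to $\zeta=He_{n+1}\in\nu$ (your $H_2$-direction) makes all $h^\alpha$ with $\alpha\notin\{1^*,\dots,n^*,n+1\}$ vanish, the Gauss equation then gives $R_{ijkl}=H^2(\delta_{ik}\delta_{jl}-\delta_{il}\delta_{jk})$, hence $\rho=n(n-1)H^2$ and, by the scalar-curvature identity, $\sigma=\tfrac{c}{4}n(n-1)$ is constant; feeding this back into $\tfrac12\Delta\sigma=\sum(h^{m^*}_{ijk})^2+(n+1)H^2\sigma$ yields $0=\sum(h^{m^*}_{ijk})^2+\tfrac{c}{4}n(n^2-1)H^2$ and hence $H=0$, a contradiction. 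Without this (or some completed substitute for your Codazzi sketch), your proof establishes only $H\in\nu$, not minimality.
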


\begin{remark}
  To prove the above theorem, we characterize the umbilical normal vector by its position in the normal bundle in Section 3 (see Proposition 3.1).
This characterization, on the one hand, can provide suitable moving frames for us in the proof of the main theorem. On the other hand, it also implies that there exist
no totally umbilical totally real submanifolds whose normal connections are flat in non-flat complex space forms (see Corollary 3.2).
\end{remark}

\section{\bf Preliminaries}
\vskip 0.4 true cm

Let $M^n$ be an $n$-dimensional totally real submanifold in $\tilde {M}^{n+p}(c)$. Choose a local field of orthonormal frames
\begin{equation}
\begin{split}
& e_1,\cdots,e_n,e_{n+1},\cdots,e_{n+p},\\
e_{1^*}=Je_1,\cdots,e_{n^*}= & Je_n,e_{(n+1)^*}=Je_{n+1},\cdots,e_{(n+p)^*}=Je_{n+p}
\end{split}
\end{equation}
in $\tilde {M}^{n+p}(c)$ in such a way that, restricted to $M^n$,
$e_1,\cdots,e_n$ are tangent to $M^n$. For convenience, we use the
following convention on the range of indices:
\begin{equation*}
\begin{split}
  A,B,C,\cdots & = 1,\cdots,n+p,\,1^*,\cdots,(n+p)^*;\\
  i,j,k,\cdots & = 1,\cdots,n;\\
  \alpha,\beta,\gamma,\cdots & = n+1,\cdots,n+p,\,1^*,\cdots,(n+p)^*;\\
  \lambda,\mu,\cdots & =  n+1,\cdots,n+p.
\end{split}
\end{equation*}
With respect to the frame field of $\tilde {M}^{n+p}(c)$ chosen above,
$J$ has the component[8]
\begin{equation}
(J_{AB})=
{\begin{array}{c@{\hspace{-12pt}}l}
   \left(\begin{array}{c|c}
   \large\textbf{0} & -I_{n+p}\\ [10pt]
   \hline
   \raisebox{-3pt}[0pt]{$\ \ I_{n+p}$} &
   \raisebox{-3pt}[0pt]{\large\textbf{0}}\\ [10pt]
\end{array}\right)&
\begin{array}{l}\left.\rule{0pt}{2pt}\right\}{\mbox{\scriptsize $i$}}\\ [-1pt]
   \left.\rule{0pt}{2pt}\right\}{\mbox{\scriptsize $ \lambda
   $}}\\ [-1pt]
   \left.\rule{0pt}{2pt}\right\}{\mbox{\scriptsize{$i^*$}}}\\ [-1pt]
   \left.\rule{0pt}{2pt}\right\}{\mbox{\scriptsize{{$\lambda^*$}}}}
   \end{array}\\ [-11pt]
\begin{array}{cccc}
   \underbrace{\rule{0pt}{0pt}}_i & \hspace{-5pt}
   \underbrace{\rule{0pt}{0pt}}_{\lambda} &
   \hspace{-5pt}
   \underbrace{\rule{0pt}{0pt}}_{i^*} &
   \hspace{-5pt}
   \underbrace{\rule{0pt}{0pt}}_{\lambda^*}
   \end{array} &
\end{array}}
\end{equation}
where $I_{n+p}$ denotes the identity matrix of degree $n+p$. Let$\{\omega^A\}$
be the dual frames of $\{e_A\}$, then the structure equations of
$\tilde {M}^{n+p}(c)$ are given by
\begin{equation}
\mathrm{d}\omega^A=\sum_B \omega^B\wedge\omega^A_B,
\end{equation}
\begin{equation}
\mathrm{d}\omega^A_B=\sum_C\omega^C_B\wedge\omega^A_C+\frac{1}{2}\sum_{C,D}K_{ABCD}\,
\omega^C\wedge\omega^D,
\end{equation}
where [8]
\begin{equation}
\begin{split}
\omega^j_i =\omega^{j^*}_{i^*},\,\,\omega^{\lambda}_{i} & = \omega^{\lambda
^*}_{i^*},\,\,\omega^{j^*}_i=\omega^{i^*}_j,\\
\omega^{\lambda^*}_i=\omega^{i^*}_{\lambda},\,\,\omega^{\mu}_{\lambda} & = \omega^{\mu
^*}_{\lambda ^*},\,\,\omega^{\mu ^*}_{\lambda}=\omega^{\lambda^*}_{\mu};
\end{split}
\end{equation}
\begin{equation}
K_{ABCD}=\frac{c}{4}(\delta _{AC} \delta _{BD}-\delta_{AD}\delta_{BC}+J_{AC} J_{BD}-J_{AD} J_{BC}+
2J_{AB} J_{CD}).
\end{equation}
Restricting these forms to $M^n$, we have [8]
\begin{equation}
\omega^{\alpha}=0,
\end{equation}
$$
\omega^{\alpha}_i=\sum_{j}h^{\alpha}_{ij}\, \omega^j,\quad
h=\sum_{\alpha,i,j}h^{\alpha}_{ij}\,\omega^i\otimes\omega^j\otimes e_{\alpha},$$
\begin{equation}
h^{i^*}_{jk}=h^{j^*}_{ik}=h^{k^*}_{ij},
\end{equation}
\begin{equation}
\left\{\begin{array}{l}\mathrm{d}\omega^i=\sum\limits_{j}\omega^j\wedge\omega^i_j,\\
\mathrm{d}\omega^i_j=\sum\limits_{k}\omega^{k}_j\wedge\omega^i_k+\frac{1}{2}
\sum\limits_{k,l}R_{ijkl}\,\omega^k\wedge\omega^l,\\
\end{array}\right.
\end{equation}
\begin{equation}
R_{ijkl}=K_{ijkl}+\sum_{\alpha}(h^{\alpha}_{ik}h^{\alpha}_{jl}
-h^{\alpha}_{il}h^{\alpha}_{jk}),
\end{equation}
\begin{equation}
\mathrm{d}\omega^{\alpha}_{\beta}=\sum_{\gamma}\omega^{\gamma}_{\beta}\wedge
\omega^{\alpha}_{\gamma}+\frac{1}{2}\sum_{k,l}R_{\alpha\beta kl}\,\omega^k\wedge
\omega^l,
\end{equation}
\begin{equation}
R_{\alpha\beta ij}=K_{\alpha\beta ij}+\sum_{k}(h^{\alpha}_{ik}h^{\beta}_{kj}
-h^{\alpha}_{jk}h^{\beta}_{ki}),
\end{equation}
where $h$ is the second fundamental form of $M^n$, and $R_{ijkl}$,
$R_{\alpha\beta ij}$ are the components of the Riemannian curvature
tensor $R$ and the normal curvature tensor $R^\perp$, respectively. We call that the normal connection is flat if $R^\perp=0$.
Let $\zeta$ be the mean curvature vector of $M^n$, i.e.,
$$
\zeta=\frac{1}{n}\sum_{\alpha,j}h^{\alpha}_{jj}e_{\alpha}.
$$
We call $|\zeta|$ the mean curvature of $M^n$, and denote it by
$H$. From the equation of  Gauss (2.10), we have [8]
\begin{equation}
\rho=n(n-1)\frac{c}{4}+n^2H^2-S,
\end{equation}
where $\rho$ is the scalar curvature of $M^n$, and
$S=\sum_{\alpha,i,j}(h^{\alpha}_{ij})^2$. Define the first and the second
covariant derivatives of $h^{\alpha}_{ij}$ as following
\begin{equation}
\sum_{k}h^{\alpha}_{ijk}\,\omega^k=\mathrm{d}h^{\alpha}_{ij}+\sum_{\beta}h^{\beta}_{ij}\,
\omega^{\alpha}_{\beta}-\sum_{l}h^{\alpha}_{lj}\,\omega^l_i-\sum_{l}h^{\alpha}_{il}\,
\omega^l_j,
\end{equation}
\begin{equation}
\sum_{l}h^{\alpha}_{ijkl}\,\omega^l=\mathrm{d}h^{\alpha}_{ijk}+\sum_{\beta}h^{\beta}_{ijk}\,
\omega^{\alpha}_{\beta}-\sum_{l}h^{\alpha}_{ljk}\,\omega^l_i-\sum_{l}h^{\alpha}_{ilk}\,
\omega^l_j-\sum_{l}h^{\alpha}_{ijl}\,
\omega^l_k,
\end{equation}
then [8]
\begin{equation}
h^{\alpha}_{ijk}=h^{\alpha}_{ikj},
\end{equation}
\begin{equation}
h^{\alpha}_{ijkl}-h^{\alpha}_{ijlk}=\sum_{m}(h^{\alpha}_{mi}R_{mjkl}
+h^{\alpha}_{mj}R_{mikl})-\sum_{\beta}h^{\beta}_{ij}R_{\alpha\beta kl}.
\end{equation}
From (2.16), (2.17), the Laplacian of $h^{\alpha}_{ij}$ is
\begin{equation}
  \Delta h^{\alpha}_{ij}=\sum\limits_{k}h^{\alpha}_{kkij}+\sum\limits_{k,m}
  (h^{\alpha}_{im}R_{mkjk}+h^{\alpha}_{km}R_{mijk})-\sum\limits_{\beta,k}h^{\beta}_{ki}
  R_{\alpha\beta jk}.
\end{equation}
From (2.6), we have
\begin{lem}
Let $M^n$ be a totally real submanifold of $\tilde {M}^{n+p}(c)$, then\\
(1) $K_{i^*j^*kl}=K_{ijkl}=\frac{c}{4}(\delta_{ik}\delta_{jl}-\delta_{il}\delta_{jk})$;\\
(2) $K_{\lambda A i j}=0,\ \ K_{\lambda^*Aij}=0;$\\
(3) $K_{\alpha i j k}=0,\ \ K_{\alpha\lambda j k}=0.$
\end{lem}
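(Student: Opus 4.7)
The plan is to substitute the explicit form of $J_{AB}$ from~(2.2) into the curvature formula~(2.6) and read off the three identities by direct case analysis. The key structural observation driving every case is that $J$ only couples an unstarred index to a starred index of matching ``type'': its non-zero entries are precisely $J_{i\,j^*}=-\delta_{ij}$, $J_{i^*\,j}=\delta_{ij}$, $J_{\lambda\,\mu^*}=-\delta_{\lambda\mu}$, and $J_{\lambda^*\,\mu}=\delta_{\lambda\mu}$, and nothing else. Consequently $J_{AB}=0$ whenever both indices are unstarred, both are starred, or they are of mismatched tangent/normal type; and of course $\delta_{AB}=0$ whenever $A$ and $B$ lie in disjoint index ranges.

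With these rules each identity becomes a line of bookkeeping. For~(1) with all four indices in $\{1,\dots,n\}$, every $J$-factor in~(2.6) sits between two unstarred tangent indices and therefore vanishes, leaving exactly $K_{ijkl}=\tfrac{c}{4}(\delta_{ik}\delta_{jl}-\delta_{il}\delta_{jk})$. For $K_{i^*j^*kl}$ the $\delta$-piece drops out (starred against unstarred), and $J_{i^*j^*}=J_{kl}=0$ kills the last $J$-term, while $J_{i^*k}J_{j^*l}-J_{i^*l}J_{j^*k}=\delta_{ik}\delta_{jl}-\delta_{il}\delta_{jk}$ reproduces the same value. For~(2) and~(3) the claim is that in \emph{every} summand of~(2.6) at least one factor is forced to zero by the structural observation: either a $\delta$ linking disjoint ranges, or a $J$ linking a forbidden pair (two tangents, or an unstarred normal with an unstarred tangent, etc.). Hence every summand vanishes and the identities follow.

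The only genuine obstacle is notational hygiene: one must keep careful track of which range a generic index ($i$, $\lambda$, $\alpha$, $i^*$, $\lambda^*$, or $A$) lives in, so that no vanishing term is accidentally retained as non-zero. In particular~(2) requires splitting $A$ into its four possible subtypes, and~(3) splits $\alpha\in\{\mu,\,i_0^*,\,\lambda_0^*\}$; in each subcase the three summands of~(2.6) are killed one by one by the rules above. Beyond this bookkeeping there is no geometric content—the lemma is purely an algebraic consequence of the block form of $J$ adapted to a totally real frame.
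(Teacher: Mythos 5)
Your proposal is correct and is exactly the argument the paper intends: the paper offers no written proof beyond the phrase ``From (2.6), we have,'' and the intended verification is precisely your substitution of the block form (2.2) of $J$ into (2.6) followed by case-by-case bookkeeping on index ranges. All of your individual case checks (the $\delta$-terms vanishing across disjoint ranges, $J$ vanishing on unstarred--unstarred and starred--starred pairs, and the surviving $J_{i^*k}J_{j^*l}$ products reproducing the Kronecker deltas in part (1)) are accurate.
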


\section{\bf Position of umbilical normal vectors in the normal bundle}
\vskip 0.4 true cm

Now we assume that $M^n$ is a totally real submanifold with flat normal connection in $\tilde {M}^{n+p}(c)\ (c\not=0)$. For a point $x\in M^n$, $J(T_xM^n)$ is a subspace of the normal space $T^{\perp}_xM^n$, we denote it by $V_x$. Then $T^{\perp}_xM^n$ can be deconposed into the orthonormal sum $T^{\perp}_xM^n=V_x\oplus V^{\perp}_x$, where $V^{\perp}_x$ is the orthonormal complement of $V_x$ in $T^{\perp}_xM^n$. We can characterize the umbilical normal vectors by their position in the normal bundle as the following proposition.

\begin{prop}
  Let $M^n$ be a totally real submanifold with flat normal connection in $\tilde {M}^{n+p}(c)\ (c\not=0)$, $x\in M^n$. Then a nonzero normal vector $\xi$ is umbilical if and only if $\xi\in V^{\perp}_x$ .
\end{prop}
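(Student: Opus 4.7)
The plan is to exploit the Ricci equation (2.12) under the flat normal connection hypothesis, which reads $[A^\alpha, A^\beta]_{ij} = -K_{\alpha\beta ij}$, where $A^\alpha$ denotes the shape operator with matrix $(h^\alpha_{ij})$. By Lemma 2.1, $K_{\alpha\beta ij}$ vanishes as soon as one of $\alpha, \beta$ sits in $V^\perp_x$; only when $\alpha = k^*$ and $\beta = l^*$ is it nonzero, with the explicit form $K_{k^* l^* ij} = \frac{c}{4}(\delta_{ki}\delta_{lj} - \delta_{kj}\delta_{li})$. Thus the shape operators associated with $V^\perp_x$ commute with every other shape operator, while the commutators among the $A^{k^*}$ carry a nontrivial $c$-dependent pattern. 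I tacitly assume $n \geq 2$ throughout (the $n = 1$ case is degenerate, since every normal vector is trivially umbilical there).

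For the necessity direction, suppose $\xi \neq 0$ is umbilical, so $A^\xi = \mu I$ and $[A^\xi, A^{k^*}] = 0$ for every $k$. Decompose $\xi = \xi_V + \xi_{V^\perp}$ orthogonally with $\xi_V = \sum_i a_i e_{i^*}$. Since the $V^\perp$-component contributes nothing to the commutator, we get $\sum_i a_i [A^{i^*}, A^{k^*}] = 0$; substituting the explicit expression for $K_{i^* k^* mn}$ reduces this to $a_m\delta_{kn} - a_n\delta_{km} = 0$ for all $k, m, n$, and choosing $k = n \neq m$ forces $a_m = 0$ for every $m$. Hence $\xi = \xi_{V^\perp} \in V^\perp_x$.

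For the sufficiency direction, let $\xi \in V^\perp_x$ be nonzero, so $[A^\xi, A^{k^*}] = 0$ for every $k$ by the first paragraph. For each $i$, introduce the auxiliary vector $\eta_i := J A^\xi e_i = \sum_l h^\xi_{il} e_{l^*} \in V_x$. Using the total symmetry $h^{i^*}_{jk} = h^{j^*}_{ik} = h^{k^*}_{ij}$ of (2.9) together with $[A^\xi, A^{k^*}] = 0$, a short manipulation gives $A^{\eta_i} = A^{i^*} A^\xi$. I then evaluate $[A^{\eta_i}, A^{j^*}]$ in two ways. Directly from the Ricci equation for $(\eta_i, j^*)$ and the linearity of $K$ in its first slot,
\begin{equation*}
[A^{\eta_i}, A^{j^*}]_{kl} = -\frac{c}{4}\bigl(h^\xi_{ik}\delta_{jl} - h^\xi_{il}\delta_{jk}\bigr).
\end{equation*}
Alternatively, expanding $[A^{i^*} A^\xi, A^{j^*}]$ by Leibniz and using $[A^\xi, A^{j^*}] = 0$, the same commutator equals $[A^{i^*}, A^{j^*}] A^\xi$; the Ricci equation for $(i^*, j^*)$ then gives
\begin{equation*}
\bigl([A^{i^*}, A^{j^*}] A^\xi\bigr)_{kl} = -\frac{c}{4}\bigl(\delta_{ik} h^\xi_{jl} - h^\xi_{il}\delta_{jk}\bigr).
\end{equation*}
Equating the two expressions and cancelling produces $h^\xi_{ik}\delta_{jl} = \delta_{ik} h^\xi_{jl}$ for all $i,j,k,l$; specializing to $j = l$ with $i \neq k$ forces $h^\xi_{ik} = 0$ off the diagonal, and $i = k, j = l$ with $i \neq j$ forces $h^\xi_{ii} = h^\xi_{jj}$. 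Hence $A^\xi = \mu I$ and $\xi$ is umbilical.

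The main obstacle is the sufficiency direction: commutativity $[A^\xi, A^{k^*}] = 0$ alone is too weak to force $A^\xi$ to be scalar. The critical idea is to build the auxiliary family $\{\eta_i\}_{i=1}^n \subset V_x$ out of $A^\xi$ itself, so that applying the Ricci equation a second time — now to the pairs $(\eta_i, j^*)$ and simultaneously to $(i^*, j^*)$ via a Leibniz expansion — transfers the nontrivial $c$-dependent commutator structure of the $A^{k^*}$ onto $A^\xi$, producing the rigidity needed to pin down its scalar form.
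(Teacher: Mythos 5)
Your proof is correct, and while the necessity direction ($\xi$ umbilical $\Rightarrow \xi\in V^\perp_x$) is essentially the paper's argument in slightly more invariant clothing --- both amount to observing that an umbilical $A^\xi$ commutes with every $A^{j^*}$, whereas the Ricci equation forces $[A^{i^*},A^{j^*}]_{ij}=-\frac{c}{4}\neq 0$, so the $V_x$-component must vanish --- your sufficiency direction is genuinely different. The paper diagonalizes $A_{\xi/|\xi|}$, assumes at least two distinct eigenvalues, shows via the Ricci equation that every $h^\alpha_{ij}$ vanishes when $i,j$ lie in different eigenvalue blocks, and then extracts the contradiction $c=0$ from $R_{i^*j^*ij}=0$ with $i,j$ in different blocks. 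You instead avoid the eigenvalue decomposition and the proof by contradiction entirely: the auxiliary vectors $\eta_i=JA^\xi e_i$ satisfy $A^{\eta_i}=A^{i^*}A^\xi$ (using total symmetry of $h^{i^*}_{jk}$ plus $[A^\xi,A^{j^*}]=0$), and evaluating $[A^{\eta_i},A^{j^*}]$ once via the Ricci equation for $(\eta_i,j^*)$ and once via the Leibniz rule yields the identity $h^\xi_{ik}\delta_{jl}=\delta_{ik}h^\xi_{jl}$, which directly pins $A^\xi$ down as a scalar matrix; I checked the two commutator computations and the cancellation, and they are right. Your route is more structural and makes transparent exactly where the rigidity comes from (the nonvanishing commutators among the $A^{k^*}$ being transported onto $A^\xi$), at the cost of the slightly less obvious identity $A^{\eta_i}=A^{i^*}A^\xi$; the paper's route is more elementary but hides the mechanism inside a block-matrix contradiction. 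Your explicit remark that $n\geq 2$ is needed is a point the paper leaves implicit (for $n=1$ the statement actually fails, since every normal vector of a curve is umbilical and the normal connection is automatically flat), so that caveat is a small improvement rather than a defect.
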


\begin{proof}
  Assume that $\xi$ is a nonzero normal vector in $V^{\perp}_x$, we prove that the shape operator $A_{\frac{\xi}{|\xi|}}$ has exactly one eigenvalue. Choose the orthonormal frame (2.1) at the point $x$, such that $e_1,\cdots,e_n$ are eigenvectors of $A_{\frac{\xi}{|\xi|}}$ and $e_{n+1}=\frac{\xi}{|\xi|}$. Without loss of generality, we may assume that the eigenvalues of $A_{\frac{\xi}{|\xi|}}$ are given by
  \begin{equation*}
    \lambda_1=\cdots=\lambda_{i_1}=\rho_1, \ \lambda_{i_1+1}=\cdots=\lambda_{i_1+i_2}=\rho_2, \cdots,
  \end{equation*}
  \begin{equation*}
    \lambda_{i_1+\cdots+i_{k-1}+1}=\cdots=\lambda_{n}=\rho_k.
  \end{equation*}
  If $\rho_1,\cdots,\rho_k\ (k\geq 2)$ are all distinct, we will get a contradiction. Put
  \begin{equation*}
    [\rho_1]=\{1,\cdots,i_1\},\ [\rho_2]=\{i_1+1,\cdots,i_1+i_2\},\cdots,
  \end{equation*}
  \begin{equation*}
    [\rho_k]=\{i_1+\cdots+i_{k-1}+1,\cdots,n\}.
  \end{equation*}
  With respect to the frame chosen above, the matrix $(h^{n+1}_{ij})$ are given by
  \begin{equation*}
    h^{n+1}_{ij}=\rho_s\delta_{ij},\ \ i,j\in[\rho_s],\ s=1,\cdots,k,
  \end{equation*}
  \begin{equation*}
    h^{n+1}_{ij}=0,\ \ i\in[\rho_s],\ j\in[\rho_t], s\not=t.
  \end{equation*}
  Since the normal connection is flat, by using the equation of Ricci (2.12) and Lemma 2.1, we have
  \begin{align*}
    0 & = R_{n+1\alpha ij}=K_{n+1\alpha ij}+\sum_l(h^{n+1}_{il}h^{\alpha}_{lj}-h^{\alpha}_{il}h^{n+1}_{lj})\\
    & =\sum_l(h^{n+1}_{il}h^{\alpha}_{lj}-h^{\alpha}_{il}h^{n+1}_{lj}).
  \end{align*}
  By setting $i\in [\rho_s],\ j\in [\rho_t],\ s\not=t$, the above equation becomes
  \begin{equation*}
    (\rho_s-\rho_t)h^{\alpha}_{ij}=0,
  \end{equation*}
  which implies that for all $\alpha$,
  \begin{equation}
    h^{\alpha}_{ij}=0,\ \ i\in[\rho_s],\ j\in[\rho_t],\ s\not=t.
  \end{equation}
  By using (2.12) and Lemma 2.1 again, we also have
  \begin{align*}
    0 & = R_{i^*j^*ij}=K_{i^*j^*ij}+\sum_l(h^{i^*}_{il}h^{j^*}_{lj}-h^{j^*}_{il}h^{i^*}_{lj})\\
    & =\frac{c}{4}+\sum_l(h^{i^*}_{il}h^{j^*}_{lj}-h^{j^*}_{il}h^{i^*}_{lj}).
  \end{align*}
  By setting $i\in [\rho_s],\ j\in [\rho_t],\ s\not=t$ in the above equation and considering (3.1), we get $c=0$, which contradicts the fact $\tilde {M}^{n+p}(c)$ is non-flat.

  Conversely, let $\xi$ be an umbilical nonzero normal vector, it can be decomposed into the sum $\xi=\xi_1+\xi_2$, where $\xi_1\in V_x,\ \xi_2\in V^{\perp}_x$. Choose the orthonormal frame (2.1) at the point $x$, such that
  \begin{equation*}
    \xi_1=|\xi_1|e_{1^*},\ \xi_2=|\xi_2|e_{n+1}.
  \end{equation*}
 Noting that $\xi$ is umbilical, we have
 \begin{equation}
   |\xi_1|h^{1^*}_{ij}+|\xi_2|h^{n+1}_{ij}=\langle\xi,\zeta\rangle\delta_{ij},
 \end{equation}
 where $\langle,\rangle$ denotes the inner product in $T^{\perp}_xM^n$. By using the equation of Ricci (2.12), Lemma 2.1 and considering the fact that the normal connection is flat, we obtain
 \begin{equation}
   \sum_l(h^{n+1}_{il}h^{2^*}_{lj}-h^{2^*}_{il}h^{n+1}_{lj})=0,
 \end{equation}
 \begin{equation}
   \sum_l(h^{1^*}_{il}h^{2^*}_{lj}-h^{2^*}_{il}h^{1^*}_{lj})=\frac{c}{4}(\delta_{1j}\delta_{2i}-\delta_{1i}\delta_{2j}),
 \end{equation}
 which, together with (3.2), imply that
 \begin{align*}
   0 & = \sum_l(|\xi_1|h^{1^*}_{il}+|\xi_2|h^{n+1}_{il})h^{2^*}_{lj}-\sum_lh^{2^*}_{il}(|\xi_1|h^{1^*}_{lj}+|\xi_2|h^{n+1}_{lj})\\
     & = |\xi_1|\sum_l(h^{1^*}_{il}h^{2^*}_{lj}-h^{2^*}_{il}h^{1^*}_{lj})+|\xi_2|\sum_l(h^{n+1}_{il}h^{2^*}_{lj}-h^{2^*}_{il}h^{n+1}_{lj})\\
     & = \frac{c}{4}|\xi_1|(\delta_{1j}\delta_{2i}-\delta_{1i}\delta_{2j}).
 \end{align*}
 By setting $i=2,j=1$ in the above equation and noting that $c\not=0$, we get $|\xi_1|=0$, hence $\xi=\xi_2\in V^{\perp}_x$.
\end{proof}

The above proposition can be verified by a classical example which appeared in \cite{YK2,LMK} as follows.

\begin{example}
   Let $\mathbb{C}P^{n+p}$ be a complex projective space of constant holomorphic sectional curvature 4 and of complex dimension $n+p$. Let $S^m(r)$ be the Euclidean sphere of dimension $m$ and radius $r$. Denote $S^m(1)$ just by $S^m$. It is well known that \cite{CCK} $S^1(\frac{1}{\sqrt{3}})\times S^1(\frac{1}{\sqrt{3}})\times S^1(\frac{1}{\sqrt{3}})$ can be naturally immersed in $S^5$ as a minimal submanifold. Then, through the Hopf fiberation $\pi:S^5\to \mathbb{C}P^2$, one can get a minimal totally real submanifold $T^2:=\pi\big(S^1(\frac{1}{\sqrt{3}})\times S^1(\frac{1}{\sqrt{3}})\times S^1(\frac{1}{\sqrt{3}})\big)$ in $\mathbb{C}P^2$ in $\mathbb{C}P^{2+p}$ (see [10]). With respect to some suitable moving frames $\{e_A\}$ of the form (2.1), \cite{YK2} and \cite{LMK} calculated that
   \begin{equation}
   (h^{1*}_{ij})=\left(
   \begin{array}{cc}
    0 & -\frac{1}{\sqrt{2}}\\
    -\frac{1}{\sqrt{2}} & 0\\
   \end{array}
   \right),\ \
    (h^{2*}_{ij})=\left(
   \begin{array}{cc}
    \frac{1}{\sqrt{2}} & 0\\
    0 & -\frac{1}{\sqrt{2}}\\
   \end{array}
   \right),
   \end{equation}
   \begin{equation}
     h^{\alpha}_{ij}=0,\ \alpha\not=1^*,2^*,\ i,j=1,2.
   \end{equation}
   By using (3.5), (3.6), the equation of Ricci (2.12) and Lemma 2.1, one can easily find that the normal connection of $T^2$ in $\mathbb{C}P^{2+p}$ is flat. Also (3.5) and (3.6) implies that for any point $x\in T^2$, the normal vectors in $V_x$ are not umbilical, while the normal vectors in $V^{\bot}_x$ are all umbilical (in fact, in this minimal case, they are all geodesic).
\end{example}

From Proposition 3.2 one can easily get the following corollary.

\begin{cor}
  In non-flat complex space forms, there exist no totally umbilical totally real submanifolds with flat normal connections.
\end{cor}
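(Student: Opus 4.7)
The plan is to obtain this corollary as an immediate consequence of the ``only if'' direction of Proposition 3.1. I would argue by contradiction: suppose $M^n$ is a totally umbilical totally real submanifold with flat normal connection in some $\tilde{M}^{n+p}(c)$ with $c \neq 0$. Being totally umbilical means the second fundamental form satisfies $h(X,Y) = \langle X,Y\rangle\,\zeta$ for some normal vector field $\zeta$, so at every point $x$ and for every nonzero normal vector $\xi$ the shape operator equals $A_{\xi} = \langle \xi,\zeta\rangle\,I$, which has a single eigenvalue. Equivalently, every nonzero normal vector of $M^n$ at every point is an umbilical normal vector.

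Next, fix any $x\in M^n$ and pick a unit tangent vector $e_1\in T_xM^n$ (possible since $n\geq 1$); set $v=Je_1$. By construction $v\in V_x=J(T_xM^n)$ with $|v|=1$, so $v$ is a nonzero umbilical normal vector at $x$. Applying Proposition 3.1 then forces $v\in V_x^{\perp}$. But the decomposition $T^{\perp}_xM^n=V_x\oplus V_x^{\perp}$ is orthogonal, so $v\in V_x\cap V_x^{\perp}=\{0\}$, contradicting $|v|=1$. Therefore no such $M^n$ can exist.

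There is essentially no obstacle in this argument: all of the analytic work --- invoking the Ricci equation (2.12) together with the ambient curvature components recorded in Lemma 2.1 to locate the umbilical vectors --- has already been packaged inside Proposition 3.1. The only observation needed beyond that proposition is that $V_x$ is an $n$-dimensional, hence nontrivial, subspace of the normal space, which is transverse to $V_x^{\perp}$. Once this is noted, the incompatibility of total umbilicity with flatness of the normal connection in a non-flat ambient space is automatic.
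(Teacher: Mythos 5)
Your argument is correct and is exactly the intended one: the paper derives this corollary directly from Proposition 3.1 (every normal vector of a totally umbilical submanifold is umbilical, hence would have to lie in $V_x^{\perp}$, contradicting the nontriviality of $V_x=J(T_xM^n)$). No differences worth noting.
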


Combining Corollary 3.2 and a classification result of totally umbilical submanifolds in non-flat complex space forms (see Theorem 1 of \cite{C10}), we have the following

\begin{cor}
  Let $M^n$ be a totally umbilical submanifold in a non-flat complex space form $\tilde{M}^{n+p}(c)$. If the normal connection of $M^n$ is flat, then $M^n$ must be a complex space form immersed holomorphically in $\tilde{M}^{n+p}(c)$ as a totally geodesic submanifold.
\end{cor}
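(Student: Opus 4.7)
The plan is to chain Chen's classification of totally umbilical submanifolds in non-flat complex space forms (Theorem~1 of \cite{C10}) with Corollary~3.2 above. Chen's theorem asserts that any totally umbilical submanifold $M^n$ of $\tilde{M}^{n+p}(c)$ with $c\neq 0$ is of one of two types: either a complex submanifold holomorphically immersed as a totally geodesic submanifold, or a totally real totally umbilical submanifold. This dichotomy is exhaustive, so once the flat-normal-connection hypothesis eliminates the latter case, only the former can occur.

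The elimination of the totally real alternative is precisely the content of Corollary~3.2: in a non-flat complex space form there exists no totally real totally umbilical submanifold whose normal connection is flat. Hence $M^n$ must be a complex submanifold holomorphically immersed as a totally geodesic submanifold of $\tilde{M}^{n+p}(c)$. To finish, I would observe that such a submanifold automatically carries the structure of a complex space form of the same holomorphic sectional curvature $c$: the induced complex structure is the restriction of the ambient $J$, and with second fundamental form vanishing, the Gauss equation (2.10) combined with the curvature formula (2.6) forces $M^n$ to have constant holomorphic sectional curvature $c$.

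No new calculation is required; the argument is essentially a correct assembly of two already-established ingredients. The only point that demands care is to verify that the hypotheses of Corollary~3.2 (non-flat ambient, totally real, flat normal connection) match exactly what is needed to suppress the totally real branch of Chen's dichotomy, and that the surviving branch translates verbatim into the statement of the corollary. Both checks are immediate from the statements, so there is no substantive obstacle beyond correct citation.
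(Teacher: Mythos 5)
Your argument is exactly the paper's: the authors obtain this corollary in one line by combining Chen--Ogiue's classification of totally umbilical submanifolds in non-flat complex space forms (Theorem 1 of \cite{C10}) with Corollary 3.2, which kills the totally real branch of that dichotomy under the flat-normal-connection hypothesis. Your closing observation that the surviving totally geodesic complex submanifold inherits constant holomorphic sectional curvature $c$ via the Gauss equation is a harmless elaboration of what the paper leaves implicit.
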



\section{\bf Proof of the theorem}
\vskip 0.4 true cm

Now we assume that $M^n$ is a pseudo-umbilical totally real submanifold with flat normal connection in $\tilde {M}^{n+p}(c)\ (c\not=0)$. If $M^n$ is not minimal, then there exists a point $x_0\in M^n$, such that $H(x_0)\not=0$. From Proposition 3.1 we may choose moving frame (2.1) such that $\zeta=He_{n+1}$ around the point $x_0$, so
\begin{equation}
  \sum_jh^{n+1}_{jj}=nH,\ \ \sum_jh^{\alpha}_{jj}=0\ (\alpha\not=n+1),
\end{equation}
from which, combined with Proposition 3.1, we know that
\begin{equation}
  h^{n+1}_{ij}=H\delta_{ij},\ \ h^{\alpha}_{ij}=0\ (\alpha\not=1^*,\cdots,n^*,n+1),
\end{equation}
which are equivalent to
\begin{equation}
  \omega^{n+1}_i=H\omega^i,\ \ \omega^{\alpha}_{i}=0,\ (\alpha\not=1^*,\cdots,n^*,n+1).
\end{equation}

For convenience, we denote that $\sigma=S-nH^2$. The proof of the theorem is based on the calculation of the Laplacian of $\sigma$. From (2.18), and noting that the normal connection is flat, we have
\begin{align}
  \frac{1}{2}\Delta\sigma  = & \sum_{i,j,k,\alpha}(h^{\alpha}_{ijk})^2+\sum_{i,j,k,\alpha}h^{\alpha}_{ij}h^{\alpha}_{kkij}-\frac{n}{2}\Delta H^2\notag\\
& +\sum_{i,j,k,m,\alpha}h^{\alpha}_{ij}(h^{\alpha}_{im}R_{mkjk}+h^{\alpha}_{km}R_{mijk}).
\end{align}
Write
\begin{align*}
  & {\mathrm{\uppercase\expandafter{\romannumeral1}}}=\sum_{i,j,k,\alpha}(h^{\alpha}_{ijk})^2,\\
  & {\mathrm{\uppercase\expandafter{\romannumeral2}}}=\sum_{i,j,k,\alpha}h^{\alpha}_{ij}h^{\alpha}_{kkij}-\frac{n}{2}\Delta H^2,\\
  & {\mathrm{\uppercase\expandafter{\romannumeral3}}}=\sum_{i,j,k,m,\alpha}h^{\alpha}_{ij}(h^{\alpha}_{im}R_{mkjk}+h^{\alpha}_{km}R_{mijk}).
\end{align*}
We will calculate \uppercase\expandafter{\romannumeral1}, \uppercase\expandafter{\romannumeral2} and \uppercase\expandafter{\romannumeral3} in turn.

From (2.14) and (4.1) one can easily get
\begin{equation}
  \sum_i h^{n+1}_{iik}=nH_k,\ \ \sum_i h^{\alpha}_{iik}=nH\omega^{\alpha}_{n+1}(e_k)\ (\alpha\not=n+1),
\end{equation}
where $H_k=e_k(H)$. Furthermore, by using (2.5) and (4.2), we deduce that
\begin{equation}
  \sum_ih^{m^*}_{iik}=nH\omega^{m^*}_{n+1}(e_k)=nH\omega^{(n+1)^*}_{m}(e_k)=nHh^{(n+1)^*}_{mk}=0.
\end{equation}
\vspace{0.4em}

\begin{lem}
Let $\mathrm{grad}\ H$ denote the gradient of $H$, then
\begin{equation*}
\mathrm{\uppercase\expandafter{\romannumeral1}}=\sum\limits_{i,j,k,m}(h^{m^*}_{ijk})^2+H^2\sigma+\frac{1}{n}
\sum\limits_{\alpha\not=n+1}\sum\limits_{k}\left(\sum\limits_{i}h^{\alpha}_{iik}\right)^2
+n|\mathrm{grad}\ H|^2.
\end{equation*}
\end{lem}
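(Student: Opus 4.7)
The plan is to partition the sum $\mathrm{I}=\sum_{\alpha,i,j,k}(h^{\alpha}_{ijk})^2$ according to the type of normal index $\alpha$: the contributions from $\alpha=m^*$ with $m\in\{1,\ldots,n\}$ already account for the first term on the right, so the real task is to identify the remaining contributions---from $\alpha=n+1$ and from the residual directions $\alpha\in\{n+2,\ldots,n+p\}\cup\{(n+1)^*,\ldots,(n+p)^*\}$---with $n|\mathrm{grad}\ H|^2$, $H^2\sigma$, and $\frac{1}{n}\sum_{\alpha\neq n+1}\sum_k\big(\sum_i h^{\alpha}_{iik}\big)^2$.

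For each such $\alpha$ I would plug the pseudo-umbilical data (4.2) into the definition (2.14). The shape-operator pieces $-\sum_l h^{\alpha}_{lj}\omega^l_i-\sum_l h^{\alpha}_{il}\omega^l_j$ either vanish (when $h^{\alpha}_{\cdot\cdot}=0$) or cancel by $\omega^l_i=-\omega^i_l$ (when $h^{n+1}_{ij}=H\delta_{ij}$); in the connection sum $\sum_\beta h^{\beta}_{ij}\omega^{\alpha}_\beta$ only $\beta=n+1$ and $\beta=m^*$ can contribute. The decisive observation is that the K\"ahler symmetries (2.5) combined with (4.3) force $\omega^{\lambda}_{m^*}=\omega^{\lambda^*}_{m^*}=\omega^{n+1}_{m^*}=0$ whenever $\lambda>n+1$, while $\omega^{(n+1)^*}_{m^*}=\omega^{n+1}_m=H\omega^m$ is the unique nonzero connection form of this mixed type. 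This yields the clean formulas
\begin{align*}
h^{n+1}_{ijk}&=H_k\delta_{ij},\\
h^{\lambda}_{ijk}&=H\delta_{ij}\omega^{\lambda}_{n+1}(e_k),\qquad h^{\lambda^*}_{ijk}=H\delta_{ij}\omega^{\lambda^*}_{n+1}(e_k),\quad\lambda>n+1,\\
h^{(n+1)^*}_{ijk}&=H\delta_{ij}\omega^{(n+1)^*}_{n+1}(e_k)+Hh^{k^*}_{ij}.
\end{align*}

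Squaring and summing, the first line gives $n|\mathrm{grad}\ H|^2$; for the $\lambda>n+1$ cases and for the first piece of the $(n+1)^*$ line, (4.4) converts each $nH^2\sum_k(\omega^{\alpha}_{n+1}(e_k))^2$ into $\frac{1}{n}\sum_k\big(\sum_i h^{\alpha}_{iik}\big)^2$. The cross term in the $(n+1)^*$ case dies because $\sum_i h^{k^*}_{ii}=0$ follows from $\zeta=He_{n+1}$, while the leftover piece is $H^2\sum_{i,j,k}(h^{k^*}_{ij})^2$, which equals $H^2\sigma$ in view of the direct consequence of (4.2) that $\sigma=S-nH^2=\sum_{i,j,m}(h^{m^*}_{ij})^2$. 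Extending the $\alpha$-summation to also include the $m^*$-terms is harmless by (4.5). The main technical obstacle is the K\"ahler connection-form bookkeeping via (2.5)---in particular, recognizing that $\alpha=(n+1)^*$ is the only ``extra'' direction whose third derivative picks up a tangent-star contribution from $h^{m^*}_{ij}$, and that this is exactly what delivers the clean $H^2\sigma$ summand.
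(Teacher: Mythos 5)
Your proposal is correct and follows essentially the same route as the paper: substitute (4.2) into (2.14), use the K\"ahler relations (2.5) together with (4.3) to kill the mixed connection forms, obtain the same three formulas for $h^{n+1}_{ijk}$, $h^{(n+1)^*}_{ijk}$ and the remaining $h^{\alpha}_{ijk}$, and then square and sum exactly as the paper does (cross term killed by $\sum_i h^{k^*}_{ii}=0$, the $\omega^{\alpha}_{n+1}$ terms converted via $\sum_i h^{\alpha}_{iik}=nH\,\omega^{\alpha}_{n+1}(e_k)$). Only note that the identities you cite as (4.4) and (4.5) are the paper's (4.5) and (4.6), respectively.
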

\vspace{0.4em}

\begin{proof}
  From (2.14) and (4.2), we have
  \begin{equation*}
    h^{\alpha}_{ijk}=e_k(h^{\alpha}_{ij})+\sum_{m}h^{m^*}_{ij}\omega^{\alpha}_{m^*}(e_k)+H\delta_{ij}\omega^{\alpha}_{n+1}(e_k)-\sum_{l}h^{\alpha}_{lj}\omega^{l}_{i}(e_k)-\sum_{l}h^{\alpha}_{il}\omega^{l}_{j}(e_k).
  \end{equation*}
  Applying (2.5) and (4.2) to the above formula, it follows that
  \begin{equation*}
      h^{n+1}_{ijk} = H_k\delta_{ij}+\sum_mh^{m^*}_{ij}\omega^{n+1}_{m^*}(e_k)
      =H_k\delta_{ij}-\sum_mh^{m^*}_{ij}\omega^{(n+1)^*}_{m}(e_k)=H_k\delta_{ij},
 \end{equation*}
 \begin{equation*}
   h^{(n+1)^*}_{ijk} = Hh^{k^*}_{ij}+H\delta_{ij}\omega^{(n+1)^*}_{n+1}(e_k),
 \end{equation*}
 \begin{equation*}
   h^{\alpha}_{ijk} = H\delta_{ij}\omega^{\alpha}_{n+1}(e_k),\ (\alpha\not=n+1,1^*,\cdots,(n+1)^*).
 \end{equation*}

 From which, together with (4.2) and (4.5), we may calculate $\mathrm{\uppercase\expandafter{\romannumeral1}}$ as follows.
 \begin{align*}
   \mathrm{\uppercase\expandafter{\romannumeral1}} & = \sum_{i,j,k,m}(h^{m^*}_{ijk})^2+nH^2\sum_{\alpha\not=n+1,1^*,\cdots,(n+1)^*}\sum_{k}\big(
    \omega^{\alpha}_{n+1}(e_k)\big)^2+n|\mathrm{grad}\ H|^2\\
    & \ \ \ \ +H^2\sum_{i,j,k}\left(h^{k^*}_{ij}+\delta_{ij}\omega^{(n+1)^*}_{n+1}(e_k)\right)^2\\
    & =\sum_{i,j,k,m}(h^{m^*}_{ijk})^2+n|\mathrm{grad}\ H|^2+nH^2\sum_{\alpha\not=n+1,1^*,\cdots,(n+1)^*}\sum_{k}
    \big(\omega^{\alpha}_{n+1}(e_k)\big)^2+H^2\sigma\\
    & \ \ \ \ +2H^2\sum_{i,k}h^{k^*}_{ii}\omega^{{(n+1)}^*}_{n+1}(e_k)+nH^2\sum_{k} \big(\omega^{{(n+1)}^*}_{n+1}(e_k)\big)^2\\
    & = \sum_{i,j,k,m}(h^{m^*}_{ijk})^2+n|\mathrm{grad}\ H|^2+H^2\sigma+nH^2\sum_{\alpha\not=n+1,1^*,\cdots,n^*}\sum_{k}\big(\omega^{\alpha}_{n+1}(e_k)\big)^2\\
    & = \sum_{i,j,k,m}(h^{m^*}_{ijk})^2+n|\mathrm{grad}\ H|^2+H^2\sigma+\frac{1}{n}\sum_{\alpha\not=n+1,1^*,\cdots,n^*}\sum_{k}\left(\sum_{i}h^{\alpha}_{iik}\right)^2.
\end{align*}
This completes the proof.
\end{proof}

\begin{lem}
$\mathrm{\uppercase\expandafter{\romannumeral2}}=-\frac{1}{n}\sum\limits_{\alpha\not=n+1}\sum\limits_{k}\left(\sum\limits_{i}
h^{\alpha}_{iik}\right)^2-n|\mathrm{grad}\ H|^2.$
\end{lem}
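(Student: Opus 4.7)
The plan is to decompose the sum over $\alpha$ in $\mathrm{II}$ according to the pseudo-umbilical structure (4.2), which forces $h^{\alpha}_{ij}=0$ for $\alpha\notin\{1^{*},\ldots,n^{*},n+1\}$, so only the $\alpha=n+1$ and $\alpha=m^{*}$ ($m=1,\ldots,n$) blocks survive. The main technical device throughout will be the identity
\[
\sum_{k}h^{\alpha}_{kkij}=(\tau^{\alpha}_{i})_{,j},\qquad \tau^{\alpha}_{i}:=\sum_{k}h^{\alpha}_{kki},
\]
which follows since covariant differentiation commutes with contraction. Thus I reduce $\mathrm{II}$ to computing $(\tau^{\alpha}_{i})_{,j}$ for $\alpha=n+1$ and $\alpha=m^{*}$, using the explicit expressions for $\tau^{\alpha}_{i}$ coming from (4.4) and (4.5).

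For the $\alpha=n+1$ block I plug in $h^{n+1}_{ij}=H\delta_{ij}$ and $\tau^{n+1}_{i}=nH_{i}$. Expanding $(\tau^{n+1}_{i})_{,j}$ as a covariant derivative of a mixed tensor produces the intrinsic Hessian $nH_{ij}$ plus the normal-bundle correction $\sum_{\beta}\tau^{\beta}_{i}\,\omega^{n+1}_{\beta}(e_{j})$. Using $\tau^{\beta}_{i}=nH\omega^{\beta}_{n+1}(e_{i})$ for $\beta\neq n+1$ together with $\omega^{n+1}_{\beta}=-\omega^{\beta}_{n+1}$, this correction equals $-nH\sum_{\beta\neq n+1}\omega^{\beta}_{n+1}(e_{i})\omega^{\beta}_{n+1}(e_{j})$. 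Tracing $i=j$ and subtracting $\tfrac{n}{2}\Delta H^{2}=nH\Delta H+n|\mathrm{grad}\ H|^{2}$ cancels the $nH\Delta H$ contributions, leaving
\[
-nH^{2}\sum_{\beta\neq n+1}\sum_{i}\bigl(\omega^{\beta}_{n+1}(e_{i})\bigr)^{2}-n|\mathrm{grad}\ H|^{2}.
\]

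For the $\alpha=m^{*}$ block, (4.5) gives $\tau^{m^{*}}_{i}=0$, so $(\tau^{m^{*}}_{i})_{,j}$ collapses to the purely connection-theoretic term $\sum_{\beta}\tau^{\beta}_{i}\,\omega^{m^{*}}_{\beta}(e_{j})$. I then show $\sum_{i,j,m}h^{m^{*}}_{ij}(\tau^{m^{*}}_{i})_{,j}=0$ by using (2.5) to rewrite $\omega^{m^{*}}_{\lambda}=\omega^{\lambda^{*}}_{m}$, whose value on $e_{j}$ is $h^{\lambda^{*}}_{mj}=0$ by (4.3); and $\omega^{m^{*}}_{\lambda^{*}}=\omega^{m}_{\lambda}$, which equals $-h^{\lambda}_{mj}$---zero by (4.3) for $\lambda\neq n+1$ and $-H\delta_{mj}$ for $\lambda=n+1$. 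Thus only $\beta=(n+1)^{*}$ can contribute, producing $-H\sum_{i}\tau^{(n+1)^{*}}_{i}\sum_{m}h^{m^{*}}_{im}$; the totally-real symmetry (2.8) then reduces $\sum_{m}h^{m^{*}}_{im}=\sum_{m}h^{i^{*}}_{mm}=n\zeta^{i^{*}}=0$, since $\zeta=He_{n+1}$ has no $i^{*}$-component.

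Combining the two blocks and using $nH\omega^{\beta}_{n+1}(e_{i})=\tau^{\beta}_{i}$ to rewrite $nH^{2}\sum_{\beta\neq n+1}\sum_{i}(\omega^{\beta}_{n+1}(e_{i}))^{2}=\tfrac{1}{n}\sum_{\alpha\neq n+1}\sum_{k}(\sum_{i}h^{\alpha}_{iik})^{2}$ (the $\alpha=m^{*}$ summands vanishing on both sides) yields the asserted expression for $\mathrm{II}$. The hardest step is expected to be the vanishing of the $\alpha=m^{*}$ contribution: it requires coordinated use of (2.5), (2.8), the pseudo-umbilical constraints (4.2)--(4.3), and the fact that $\zeta\in V^{\perp}$, so that every nonzero normal-connection coefficient in the expansion is ultimately traced against the $i^{*}$-component of the mean curvature, which vanishes.
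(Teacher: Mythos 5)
Your proposal is correct and follows essentially the same route as the paper: both split $\sum_{\alpha}h^{\alpha}_{ij}h^{\alpha}_{kkij}$ into the $n+1$ and $m^{*}$ blocks via (4.2), expand the contracted second covariant derivative, and kill the $m^{*}$ block by the frame relations (2.5) together with the symmetry (2.8) and the vanishing of the $i^{*}$-component of $\zeta$; your $\tau^{\alpha}_{i}$ notation and the explicit appeal to commutation of covariant differentiation with contraction are only an organizational repackaging of the paper's direct expansion of $h^{\alpha}_{kkij}$. (Minor citation slips aside --- the trace identities you use are (4.5)--(4.6), not (4.4)--(4.5) --- the argument is complete.)
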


\begin{proof}
  From (4.2),
  $$
\sum_{\alpha,i,j,k}h^{\alpha}_{ij}h^{\alpha}_{kkij}=\sum_{m,i,j,k}
h^{m^*}_{ij}h^{m^*}_{kkij}+\sum_{i,j,k}h^{n+1}_{ij}h^{n+1}_{kkij}.$$
We compute each term of the right hand side of the above formula as follows. During the procedure, we have used the formulas (2.5), (2.8), (4.2) and (4.6).
\begin{align*}
\sum_{m,i,j,k}h^{m^*}_{ij}h^{m^*}_{kkij}
     = & \sum_{m,i,j,k}h^{m^*}_{ij}\left(e_j(h^{m^*}_{kki})+\sum_{\beta}h^{\beta}_{kki}
        \omega^{m^*}_{\beta}(e_j) -\sum_{l}h^{m^*}_{lki}\omega^{l}_{k}(e_j)\right.\\
       & \left.-\sum_{l}h^{m^*}_{kli}\omega
        ^{l}_{k}(e_j)-\sum_{l}h^{m^*}_{kkl}\omega^{l}_{i}(e_j)\right)\\
     = &\sum_{m,i,j,k}\sum_{\beta}h^{m^*}_{ij}h^{\beta}_{kki}\omega^{m^*}_{\beta}(e_j)\\
     = & \sum_{m,i,j,k,l}h^{m^*}_{ij}h^{l^*}_{kki}\omega^{m^*}_{l^*}(e_j)+\sum_{m,i,j,k}h^{m^*}_{ij}h^{n+1}_{kki}\omega^{m^*}_{n+1}(e_j)\\
     & + \sum_{m,i,j,k}\sum_{\lambda\not=n+1}h^{m^*}_{ij}h^{\lambda}_{kki}\omega^{m^*}_{\lambda}(e_j)+\sum_{m,i,j,k}\sum_{\lambda}h^{m^*}_{ij}h^{\lambda^*}_{kki}\omega^{m^*}_{\lambda^*}(e_j)\\
      = & -\sum_{m,i,j,k}\sum_{\lambda}h^{m^*}_{ij}h^{\lambda^*}_{kki}\omega^{\lambda}_{m}(e_j)\\
     = & -H\sum_{m,i,j,k}h^{m^*}_{ij}\delta_{mj}h^{(n+1)^*}_{kki}\\
    = & 0,
\end{align*}
\begin{align*}
  \sum_{i,j,k}h^{n+1}_{ij}h^{n+1}_{kkij} & = \sum_{i,j,k}H\delta_{ij}\Big(e_j(h^{n+1}_{kki})+\sum_{\beta}h^{\beta}_{kki}\omega^{n+1}_{\beta}(e_j)-\sum_lh^{n+1}_{lki}\omega^{l}_{k}(e_j)\\
  & \ \ \ \  - \sum_{l}h^{n+1}_{kli}\omega^{l}_{k}(e_j)-\sum_{l}h^{n+1}_{kkl}\omega^{l}_{i}(e_j)\Big)\\
  = & \sum_{i,j}H\delta_{ij}\left(e_j(nH_i)-nH_l\omega^{l}_{i}(e_j)-\frac{1}{nH}\sum_{\beta\not=n+1}\left(\sum_kh^{\beta}_{kki}\right)\left(\sum_lh^{\beta}_{llj}\right)\right)\\
  = & \sum_{i,j}H\delta_{ij}\left(nH_{ij}-\frac{1}{nH}\sum_{\beta\not=n+1}\left(\sum_kh^{\beta}_{kki}\right)\left(\sum_lh^{\beta}_{llj}\right)\right)\\
  = & nH\Delta H-\frac{1}{n}\sum_{\beta\not=n+1}\sum_i\left(\sum_kh^{\beta}_{kki}\right)^2.
  \end{align*}
  Noting that
  \begin{equation*}
    \Delta H^2=2H\Delta H+2|\mathrm{grad}\ H|^2,
  \end{equation*}
 hence
 \begin{equation*}
   \mathrm{\uppercase\expandafter{\romannumeral2}}=-\frac{1}{n}\sum\limits_{\alpha\not=n+1}\sum\limits_{i}\left(\sum\limits_{k}
h^{\alpha}_{kki}\right)^2-n|\mathrm{grad}\ H|^2.
 \end{equation*}
\end{proof}

\begin{lem}
  $\mathrm{\uppercase\expandafter{\romannumeral3}}=nH^2\sigma.$
\end{lem}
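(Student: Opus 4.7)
The plan is to compute $\mathrm{III}$ by substituting the Gauss equation (2.10) for both $R_{mkjk}$ and $R_{mijk}$, separating the ambient-curvature contributions from the quartic $h$-contributions, and then exploiting the pseudo-umbilical structure (4.2) together with the commutator identity forced by the flat normal connection.

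First I would write $\mathrm{III} = T_{1}+T_{2}$ with
\[
T_{1}=\sum_{\alpha,i,j,m}h^{\alpha}_{ij}h^{\alpha}_{im}\!\sum_{k}R_{mkjk},\qquad T_{2}=\sum_{\alpha,i,j,k,m}h^{\alpha}_{ij}h^{\alpha}_{km}R_{mijk},
\]
and use Lemma 2.1 to evaluate $\sum_{k}K_{mkjk}=\tfrac{c(n-1)}{4}\delta_{mj}$ and $K_{mijk}=\tfrac{c}{4}(\delta_{mj}\delta_{ik}-\delta_{mk}\delta_{ij})$. Substituting (4.2), the $h^{\beta}$-sums coming from Gauss collapse: the $\beta=n{+}1$ pieces reduce via $h^{n+1}_{ij}=H\delta_{ij}$ to scalar multiples of $\delta_{mj}$ or contractions that produce $nH$ and $n^{2}H^{2}$, while the $\beta=l^{*}$ pieces remain as genuine quartic contractions of the symmetric tensor $a^{l}_{ij}:=h^{l^{*}}_{ij}$.

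Next I would collect all the non-quartic pieces. The computation should give
\[
T_{1}=\left[\tfrac{c(n-1)}{4}+(n-1)H^{2}\right]S-H^{2}\sigma-\mathcal{Q}_{1},\qquad T_{2}=\tfrac{c}{4}(S-n^{2}H^{2})+H^{2}(S-n^{2}H^{2})+H^{2}\sigma+\mathcal{Q}_{2},
\]
where $\mathcal{Q}_{1},\mathcal{Q}_{2}$ are the pure $\alpha=m^{*},\beta=l^{*}$ quartic terms (the $\alpha=n{+}1,\beta=l^{*}$ contribution equals $H^{2}\sigma$ because $\sum_{k}h^{l^{*}}_{kk}=0$ by (4.5)). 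Combining coefficients, the $c$-terms sum to $\tfrac{cn}{4}\sigma$, the $H^{2}S$-terms sum to $nH^{2}\sigma$, and the $\pm H^{2}\sigma$ pieces cancel, leaving
\[
\mathrm{III}=\tfrac{cn}{4}\sigma+nH^{2}\sigma+(\mathcal{Q}_{2}-\mathcal{Q}_{1}).
\]
Hence the whole task reduces to proving $\mathcal{Q}_{1}-\mathcal{Q}_{2}=\tfrac{cn}{4}\sigma$.

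To handle that, I would apply the Ricci equation (2.12) with $\alpha=m^{*},\beta=l^{*}$ and $R^{\perp}=0$; together with Lemma 2.1 this yields the key commutator identity
\[
\sum_{k}\bigl(h^{m^{*}}_{ik}h^{l^{*}}_{kj}-h^{l^{*}}_{ik}h^{m^{*}}_{kj}\bigr)=-\tfrac{c}{4}(\delta_{mi}\delta_{lj}-\delta_{mj}\delta_{li}).
\]
Using the total symmetry $h^{i^{*}}_{jk}=h^{j^{*}}_{ik}=h^{k^{*}}_{ij}$, both $\mathcal{Q}_{1}$ and $\mathcal{Q}_{2}$ may be rewritten as traces of products of the symmetric matrices $A^{m}:=(h^{m^{*}}_{ij})$; schematically $\mathcal{Q}_{1}=\sum_{m,l}\operatorname{tr}\!\bigl((A^{m})^{2}(A^{l})^{2}\bigr)$ and $\mathcal{Q}_{2}$ decomposes into a $\operatorname{tr}\!\bigl((A^{m}A^{l})^{2}\bigr)$-type piece and a $\bigl(\operatorname{tr}(A^{m}A^{l})\bigr)^{2}$-type piece. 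The difference $\mathcal{Q}_{1}-\mathcal{Q}_{2}$ can then be reorganized as $\sum_{m,l}\operatorname{tr}\!\bigl(A^{m}[A^{m},A^{l}]A^{l}\bigr)$ (up to terms that cancel by the vanishing traces $\operatorname{tr}(A^{m})=0$ from (4.5)), at which point the commutator identity replaces $[A^{m},A^{l}]$ by $-\tfrac{c}{4}(e_{m}e_{l}^{\top}-e_{l}e_{m}^{\top})$ and the remaining trace collapses via total symmetry to $n\sigma$, producing exactly $\tfrac{cn}{4}\sigma$.

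The main obstacle is the last step: reshuffling the four-fold sums $\mathcal{Q}_{1},\mathcal{Q}_{2}$ so that the commutator identity can be applied cleanly. The total symmetry of $h^{i^{*}}_{jk}$ in all three indices is what allows every relevant contraction to be converted into a trace of products of the matrices $A^{m}$, and it is the trace-freeness $\operatorname{tr}(A^{m})=0$ together with the antisymmetry of the right-hand side of the Ricci identity that causes the quartic expression to collapse to a purely quadratic one of the correct sign.
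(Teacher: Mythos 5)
Your reduction is correct as far as it goes: the formulas for $T_1$ and $T_2$ check out, and the lemma is indeed equivalent to the identity $\mathcal{Q}_1-\mathcal{Q}_2=\tfrac{cn}{4}\sigma$. The gap is in the last step, which as written does not work. Using the total symmetry one finds $\mathcal{Q}_2=\sum_{m,l}\operatorname{tr}\bigl((A^mA^l)^2\bigr)-\sum_{m,l}\bigl(\operatorname{tr}(A^mA^l)\bigr)^2$, so that
\[
\mathcal{Q}_1-\mathcal{Q}_2=\sum_{m,l}\operatorname{tr}\bigl(A^m[A^m,A^l]A^l\bigr)+\sum_{m,l}\bigl(\operatorname{tr}(A^mA^l)\bigr)^2 .
\]
The second summand does \emph{not} cancel by trace-freeness: by the total symmetry $h^{i^*}_{jk}=h^{j^*}_{ik}=h^{k^*}_{ij}$ one has $\sum_m\bigl((A^m)^2\bigr)_{jl}=\operatorname{tr}(A^jA^l)$, hence $\sum_{m,l}(\operatorname{tr}(A^mA^l))^2=\mathcal{Q}_1$, a strictly positive quantity. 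Moreover the commutator trace alone, evaluated with $[A^m,A^l]=-\tfrac{c}{4}(e_me_l^{\top}-e_le_m^{\top})$ and $\operatorname{tr}(A^m)=0$, collapses to $\tfrac{c}{4}\sigma$, not $\tfrac{cn}{4}\sigma$. So your bookkeeping is short by $\tfrac{c(n-1)}{4}\sigma$. To close the gap you need one more consequence of the flat-normal Ricci identity: contracting $\sum_m(h^{m^*}_{ik}h^{m^*}_{jl}-h^{m^*}_{il}h^{m^*}_{jk})=-\tfrac{c}{4}(\delta_{ik}\delta_{jl}-\delta_{il}\delta_{jk})$ over $k=i$ and using $\operatorname{tr}(A^m)=0$ gives $\sum_m(A^m)^2=\tfrac{c(n-1)}{4}I_n$, whence $\mathcal{Q}_1=\sum_{j,l}\bigl(\tfrac{c(n-1)}{4}\delta_{jl}\bigr)^2=\tfrac{c(n-1)}{4}\sigma$ and the total is $\tfrac{c(n-1)}{4}\sigma+\tfrac{c}{4}\sigma=\tfrac{cn}{4}\sigma$ as required.

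You should also be aware that the paper sidesteps all of this quartic bookkeeping. It applies the Ricci equation one level earlier: since $K_{ijkl}=K_{i^*j^*kl}$ (Lemma 2.1) and $h^{m^*}_{ik}=h^{i^*}_{mk}$ (2.8), the Gauss equation under (4.2) reads $R_{ijkl}=R_{i^*j^*kl}+H^2(\delta_{ik}\delta_{jl}-\delta_{il}\delta_{jk})$, and flatness of the normal connection kills $R_{i^*j^*kl}$, so $M^n$ has constant curvature $H^2$. Substituting $R_{mkjk}$ and $R_{mijk}$ in this closed form into $\mathrm{III}$ gives $(n-1)H^2S+H^2(S-n^2H^2)=nH^2\sigma$ in two lines. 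Your route is salvageable with the contracted identity above, but the paper's observation that the Ricci equation should be used on the full tensor $R_{ijkl}$ before contracting is what makes the proof short; it is worth adopting.
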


\begin{proof}
  Applying Lemma 2.1, (4.2), (2.8) and (2.12) to the equation of Gauss (2.10), noting that the normal connection is flat, we have
  \begin{align}
    R_{ijkl} & = K_{ijkl}+\sum_{\alpha}(h^{\alpha}_{ik}h^{\alpha}_{jl}
             -h^{\alpha}_{il}h^{\alpha}_{jk})\notag\\
         & = K_{i^*j^*kl}+\sum_{m}(h^{m^*}_{ik}h^{m^*}_{jl}-h^{m^*}_{il}h^{m^*}_{jk})
             +H^2(\delta_{ik}\delta_{jl}-\delta_{il}\delta_{jk})\notag\\
         & = K_{i^* j^*kl}+\sum_{m}(h^{i^*}_{km}h^{j^*}_{ml}-h^{i^*}_{lm}h^{j^*}_{mk})
             +H^2(\delta_{ik}\delta_{jl}-\delta_{il}\delta_{jk})\notag\\
         & = R_{i^*j^*kl}+H^2(\delta_{ik}\delta_{jl}-\delta_{il}\delta_{jk})\notag\\
         & = H^2(\delta_{ik}\delta_{jl}-\delta_{il}\delta_{jk}).
  \end{align}
  Then Lemma 4.3 follows by a direct calculation.
\end{proof}

\begin{proof}[Proof of Theorem 1.1]\ Assume $M^n$ is not minimal, then there exists a point $x_0\in M^n$, such that $H(x_0)\not=0$. Applying Lemma 4.1, Lemma 4.2 and Lemma 4.3 to (4.4), we have
\begin{equation}
  \frac{1}{2}\Delta\sigma=\sum_{m,i,j,k}(h^{m^*}_{ijk})^2+(n+1)H^2\sigma.
\end{equation}
On the other hand, (4.7) implies that the scalar curvature of $M^n$ is given by
\begin{equation*}
  \rho=n(n-1)H^2,
\end{equation*}
substituted into (2.13), we see that $\sigma=\frac{c}{4}n(n-1)$. Thus (4.8) becomes
\begin{equation*}
  0=\sum_{m,i,j,k}(h^{m^*}_{ijk})^2+\frac{c}{4}n(n^2-1)H^2.
\end{equation*}
Hence $H=0$ around the point $x_0$, which contradicts the assumption $H(x_0)\not=0$. Therefore, $M^n$ must be minimal. This completes the proof.
\end{proof}


\vskip 0.5 true cm

{\bf Acknowledgements}\ \ This work is supported by the Foundation for Excellent Young Talents of Higher Education of Anhui Province (Grant No.\,2011SQRL021ZD).


\bigskip
\bigskip

\end{document}